\numberwithin{table}{section}
\numberwithin{equation}{section}
\theoremstyle{plain}
\newtheorem{theorem}{Theorem}[section]
\newtheorem{proposition}[theorem]{Proposition}
\newtheorem{definition}[theorem]{Definition}
\newtheorem{example}[theorem]{Example}
\newtheorem{remark}[theorem]{Remark}
\author[1]{ \textbf{Bryan S. Hernandez}}
\author[2]{ \textbf{Patrick Vincent N. Lubenia}}
\author[2,3,4]{\textbf{Eduardo R. Mendoza}}
\affil[1]{\small \textit{Institute of Mathematics, University of the Philippines Diliman, Quezon City 1101, Philippines}}
\affil[2]{\small \textit{Systems and Computational Biology Research Unit, Center for Natural Sciences and Environmental Research, Manila 0922, Philippines}}
\affil[3]{\small \textit{Mathematics and Statistics Department, De La Salle University, Manila  0922, Philippines}}
\affil[4]{\small \textit{Max Planck Institute of Biochemistry, Martinsried near Munich 82152, Germany}}
\affil[*]{Email addresses: \texttt{bshernandez@up.edu.ph},
\texttt{pnlubenia@upd.edu.ph}, \texttt{eduardo.mendoza@dlsu.edu.ph}}
\title{\textbf{Equilibria Decomposition-Based Comparison of Reaction Networks of Wnt Signaling}}
\date{}
\begin{document}
\maketitle
\begin{abstract} 
The Wnt signaling pathway plays a critical role in various biochemical processes, including embryonic development, tissue homeostasis, and cancer progression. In this paper, we conduct a comparative analysis of $\beta$-catenin-dependent Wnt signaling reaction networks, which we refer to as the Feinberg, Schmitz, and MacLean models, based on the previous study by MacLean et al. (PNAS USA 2015).
Our analysis is based on the (unique) finest independent decomposition (FID) of each reaction network and our comparative techniques include equilibria parametrizations (EP) and the newly developed methods of Common Reactions Equilibria (CORE) analysis and Concordance Profile (CP) analysis.
Our investigation yields three interesting results concerning the equilibria sets of these models. Firstly, we explore the concept of absolute concentration robustness (ACR), wherein a system exhibits ACR in a specific species if the equilibrium value for that species is the same for any positive equilibrium. Through ACR analysis employing FID and EP, we observe that both the Schmitz and MacLean models lack ACR, whereas the Feinberg model demonstrates ACR in a single species.
Second, our analyses using FID and CORE reveal important relationships within the equilibria sets of the augmented Schmitz and MacLean models.
Furthermore, FID and CORE identify the lack of a substantial relationship between the equilibria sets of the Feinberg and MacLean models.
Hence, these methods detect subtle differences between the Feinberg and MacLean models and also between the Schmitz and MacLean models, which are not evident in the standard reaction network analysis.
Finally, based on the concordance levels, CP analysis indicates that the MacLean and Schmitz models are more similar than the MacLean and Feinberg models.
\\ \\
	{\bf{Keywords:}} {chemical reaction networks, finest independent decompositions, equilibria parametrization, common reactions equilibria analysis, concordance profile analysis, absolute concentration robustness, Wnt signaling}
	
\end{abstract}

\thispagestyle{empty}
\section{Introduction}
\label{sec:1}

The Wnt signaling pathway plays a crucial role in numerous biochemical processes, including embryonic development, tissue homeostasis, and cancer progression
\cite{Liu,Patel:Wnt,Sharma:Wnt,Silveira:Wnt,Yang2016}.
Parameter-free analysis through reaction networks has been used as a tool for studying the complexities of Wnt signaling pathways \cite{Maclean} primarily to investigate the occurrence of bistability, i.e., the network has the capacity to admit two stable positive equilibria
with the same conserved quantities
\cite{dickenstein2019multistationarity}.

Building upon previous research efforts, particularly those of MacLean et al. \cite{Maclean}, we analyze and compare $\beta$-catenin-dependent Wnt signaling models.
In particular, we utilize methods of finest independent decompositions (FIDs) \cite{Hernandezetal2022,HEDC2021} and equilibria parametrizations (EP) \cite{HernandezetalPCOMP2023,JMP2019:parametrization}. Importantly, motivated by Wnt signaling, we introduce two novel approaches, which we call Common Reactions Equilibria (CORE) analysis and Concordance Profile (CP) analysis, to reveal kinetic and structural relationships among reaction networks on the basis of their sets of positive equilibria.

The Lee model \cite{Lee} focuses mainly on elucidating the formation of the destruction complex from its individual components and how its subsequent ability to degrade $\beta$-catenin is influenced by the presence or absence of an external Wnt stimulus. This model assumes a uniform distribution of all species throughout the cell, without distinguishing between the nucleus and the cytoplasm.
Feinberg \cite{FeinbergBook2019} introduced modifications to the Lee model by consolidating a complex of three species into a single species, making a reaction reversible, and eliminating another species from the network. The model was used by Feinberg to explore the concept of the ``hidden'' concordance property in a reaction network \cite{FeinbergBook2019}.
Next, the Schmitz model \cite{Schmitz} investigates the impact of shuttling of $\beta$-catenin and the destruction complex between the cytoplasm and the nucleus on the binding of the T-cell factor to $\beta$-catenin within the nucleus.
Finally, the MacLean model \cite{Maclean} focuses on both degradation of $\beta$-catenin and shuttling between the cytoplasm and the nucleus, potentially serving as a mechanism for governing bistability in the pathway \cite{Maclean}.

In our recent study \cite{HLM2024embeddingbased}, we showed strong similarity between the Lee and the Feinberg models. Furthermore, the Lee model is mono-stationary while the three remaining models are multi-stationary. 
Hence, we focus on the analysis of the latter three multistationary models: the Feinberg, Schmitz, and MacLean models. The reaction networks of the models are given in Tables \ref{tab:reactionsNSandNM} and \ref{tab:reactionsNFandNM}.

Using the algorithm of Hernandez et al. \cite{HernandezetalPCOMP2023}, which utilizes FIDs, we determine equilibria parametrizations of the networks and infer the network species with absolute concentration robustness (ACR) \cite{SHFE2010}:
the Schmitz and MacLean networks lack ACR while
the Feinberg network has ACR in a single species. This means that the positive equilibrium of the said species of the Feinberg network does not depend on any initial concentrations of the other species. On the other hand, the value of the positive equilibrium value of any species of the Schmitz and MacLean networks are always dependent on the initial concentrations of some species.

Importantly, our analyses using FID and CORE reveal essential relationships within the equilibria sets of the augmented Schmitz and MacLean models. Furthermore, these two methods show the absence of a significant relationship between the equilibria sets of the Feinberg and MacLean models. These findings are not evident in the standard reaction network analysis.
We also analyze the level of concordance of the networks. 
According to the concordance levels, CP analysis suggests that the MacLean and Schmitz networks exhibit greater similarity compared to the MacLean and Feinberg networks.

By employing a computational approach, our study contributes to the ongoing efforts in investigating complexities of biochemical pathways and offers new perspectives for comparing biochemical reaction networks.

\section{Comparison of basic structural and kinetic properties of the networks}
\label{section:structural:kinetic:properties}

Various reports were generated using the Windows application CRNToolbox \cite{FeinbergToolbox} to complete the overview of the basic properties of the networks, and their results are collected in Table \ref{tab:CRNToolbox}. The kinetic properties include both purely kinetic, i.e., those that remain invariant under dynamical equivalence, and structo-kinetic, i.e., those which may vary. Non-degeneracy of an equilibrium is an example of the former class, while mono-/multi-stationarity is an example of the latter type.
As seen from the table, we can verify the coincidence of the basic properties among the Feinberg, Schmitz, and MacLean models, which we denote as $\mathscr{N}_F$, $\mathscr{N}_S$, and $\mathscr{N}_M$, respectively.

\begin{table}[ht!]
	\centering
	\caption{CRNToolbox results of the Feinberg ($\mathscr{N}_F$), Schmitz ($\mathscr{N}_S$) and MacLean ($\mathscr{N}_M$) Wnt signaling models}
	\label{tab:CRNToolbox}
	\begin{tabular}{|l|c|c|c|}
		\hline
		\multicolumn{1}{|c|}{Property} &  $\mathscr{N}_F$ & $\mathscr{N}_S$ & $\mathscr{N}_M$ \\
		\hline
            Conservative & No & No & No \\
		\hline
            Positive dependent & Yes & Yes & Yes \\
		\hline
            Existence of degenerate  & \multirow{2}{*}{\shortstack{Yes}} & \multirow{2}{*}{\shortstack{Yes}} & \multirow{2}{*}{\shortstack{Yes}} \\
            equilibrium & & & \\
		\hline
            Non-degenerate network & Yes & Yes & Yes \\
		\hline
            Injective
            & No & No & No \\
            \hline
            Multi-stationary & Yes & Yes & Yes \\
            \hline
	\end{tabular}
\end{table}

\section{Comparison of the FIDs of the Feinberg, MacLean, and Schmitz models}
\label{section:comparison:three:models}

In this section, we first present a method for comparative analysis using the FIDs of reaction networks. We then compute the FIDs of the three networks, which turn out to differ significantly. In subsequent sections, we use this variation in FIDs to infer interesting relationships between networks.

\subsection{A novel structural property: the network's finest independent decomposition}
\label{structural:property:FID}

The concept of FID was introduced by B. Hernandez and R. De la Cruz in \cite{HEDC2021}. Since some networks have only the trivial decomposition as an independent decomposition, they provided a topological characterization for the existence of a non-trivial FID. Furthermore, they developed an algorithm to compute the said decomposition. The uniqueness of the FID was shown in \cite{Hernandezetal2022}, establishing the FID as a novel network property. The subnetworks from the FID comprise the smallest ``building blocks'' of a network.

The significance of FID for any kinetics on a network derives from Feinberg's Decomposition Theorem \cite{Feinberg1987stability,FeinbergBook2019}: for any independent decomposition, the positive equilibria set of the whole network is the intersection of the positive equilibria sets of the independent subnetworks (Theorem \ref{theorem:independent} in the Appendix in this paper). 

\begin{remark}
The positive equilibria of the FID subnetworks are the smallest ``building blocks'' of the equilibria of the whole network.  
\end{remark}

\subsection{Computation and comparison of the FIDs of the three networks}

The following lists and tables collect the information about the FIDs of the three networks, providing the basis for our subsequent analysis.

Using the algorithm of Hernandez and De la Cruz \cite{HEDC2021}, the FID of $\mathscr{N}_F$ has eight subnetworks:

    \noindent
    $\mathscr{N}_{F,1} = \{ R_1, R_4, R_5, R_{12}, R_{38}, R_{45}, R_{46}\}$ \\
    $\mathscr{N}_{F,2} = \{ R_{14}, R_{15} \}$ \\
    $\mathscr{N}_{F,3} = \{ R_{18}, R_{19} \}$ \\
    $\mathscr{N}_{F,4} = \{ R_{43}, R_{44} \}$ \\
    $\mathscr{N}_{F,5} = \{ R_{47}, R_{48} \}$ \\
    $\mathscr{N}_{F,6} = \{ R_{49}, R_{50} \}$ \\
    $\mathscr{N}_{F,7} = \{ R_{51}, R_{52} \}$ \\
    $\mathscr{N}_{F,8} = \{ R_{53}, \dots, R_{56} \}$.

Table \ref{tab:FALNetworkNumbers} presents the network numbers of the subnetworks from the FID of $\mathscr{N}_F$.

\begin{table}[ht!]
    \centering
    \caption{Network numbers of the Feinberg model ($\mathscr{N}_F$) and the subnetworks of its FID}
    \label{tab:FALNetworkNumbers}
    \begin{tabular}{lccccccccc}
	\hline
        Network numbers & $\mathscr{N}_F$ & $\mathscr{N}_{F,1}$ & $\mathscr{N}_{F,2}$ & $\mathscr{N}_{F,3}$ & $\mathscr{N}_{F,4}$ & $\mathscr{N}_{F,5}$ & $\mathscr{N}_{F,6}$ & $\mathscr{N}_{F,7}$ & $\mathscr{N}_{F,8}$ \\
	\hline
	Species                         & 15 & 5 & 2 & 2 & 3 & 1 & 3 & 3 & 4 \\
        Complexes                       & 21 & 7 & 2 & 2 & 2 & 2 & 2 & 2 & 5 \\
        Reactant complexes              & 19 & 6 & 2 & 2 & 2 & 2 & 2 & 2 & 4 \\
        Reversible reactions            & 9  & 2 & 1 & 1 & 1 & 1 & 1 & 1 & 1 \\
        Irreversible reactions          & 5  & 3 & 0 & 0 & 0 & 0 & 0 & 0 & 2 \\
        Reactions                       & 23 & 7 & 2 & 2 & 2 & 2 & 2 & 2 & 4 \\
        Linkage classes (LC)                 & 7 & 2 & 1 & 1 & 1 & 1 & 1 & 1 & 2 \\
        Strong LC          & 12 & 5 & 1 & 1 & 1 & 1 & 1 & 1 & 4 \\
        Terminal strong LC & 7  & 2 & 1 & 1 & 1 & 1 & 1 & 1 & 2 \\
        Rank                            & 12 & 4 & 1 & 1 & 1 & 1 & 1 & 1 & 2 \\
        Deficiency                      & 2  & 1 & 0 & 0 & 0 & 0 & 0 & 0 & 1 \\
	\hline
    \end{tabular}
\end{table}

On the other hand, $\mathscr{N}_M$ has the following FID. The corresponding network numbers are shown in Table \ref{tab:MacLeanFID}:

\allowdisplaybreaks
\begin{multicols}{2}
\noindent
\begin{align*}
	& \mathscr{N}_{M,1} = \{ R_1, \ldots, R_7, R_{36}, \ldots, R_{39} \} \\
	& \mathscr{N}_{M,2} = \{ R_8, R_9 \} \\
	& \mathscr{N}_{M,3} = \{ R_{18}, R_{19} \} \\
	& \mathscr{N}_{M,4} = \{ R_{20}, R_{21} \} \\
	& \mathscr{N}_{M,5} = \{ R_{22}, R_{23} \} \\
	& \mathscr{N}_{M,6} = \{ R_{24}, \ldots, R_{29} \} \\
	& \mathscr{N}_{M,7} = \{ R_{30}, \ldots, R_{35} \}.
\end{align*}
\end{multicols}

\begin{table}[ht!]
	\centering
	\caption{Network numbers of the MacLean model ($\mathscr{N}_M$) and the subnetworks of its FID}
	\label{tab:MacLeanFID}
	\begin{tabular}{lcccccccc}
		\hline
		Network numbers & $\mathscr{N}_M$ & $\mathscr{N}_{M,1}$ & $\mathscr{N}_{M,2}$ & $\mathscr{N}_{M,3}$ & $\mathscr{N}_{M,4}$ & $\mathscr{N}_{M,5}$ & $\mathscr{N}_{M,6}$ & $\mathscr{N}_{M,7}$ \\
		\hline
		Species & 19 & 6 & 3 & 2 & 2 & 2 & 6 & 6 \\
		Complexes & 28 & 9 & 2 & 2 & 2 & 2 & 6 & 6 \\
		Reactant complexes & 22 & 7 & 2 & 2 & 2 & 2 & 4 & 4 \\
		Reversible reactions & 12 & 3 & 1 & 1 & 1 & 1 & 2 & 2 \\
		Irreversible reactions & 7 & 5 & 0 & 0 & 0 & 0 & 2 & 2 \\
		Reactions & 31 & 11 & 2 & 2 & 2 & 2 & 6 & 6 \\
		Linkage classes (LC) & 10 & 3 & 1 & 1 & 1 & 1 & 2 & 2 \\
		Strong LC & 16 & 5 & 1 & 1 & 1 & 1 & 4 & 4 \\
		Terminal strong LC & 10 & 3 & 1 & 1 & 1 & 1 & 2 & 2 \\
		Rank & 14 & 4 & 1 & 1 & 1 & 1 & 3 & 3 \\
		Deficiency & 4 & 2 & 0 & 0 & 0 & 0 & 1 & 1 \\
		\hline
	\end{tabular}
\end{table}

Finally, the FID of the Schmitz network $\mathscr{N}_S$ is as follows (Table \ref{tab:schmitzFID} presents the network numbers):

\begin{multicols}{2}
\noindent
\begin{align*}
	& \mathscr{N}_{S,1} = \{ R_1, \ldots, R_7, R_{10}, \ldots, R_{13} \} \\
	& \mathscr{N}_{S,2} = \{ R_8, R_9 \} \\
	& \mathscr{N}_{S,3} = \{ R_{14}, R_{15} \} \\
	& \mathscr{N}_{S,4} = \{ R_{16}, R_{17} \}.
\end{align*}
\end{multicols}

\begin{table}[ht!]
	\centering
	\caption{Network numbers of the Schmitz model ($\mathscr{N}_S$) and the subnetworks of its FID}
	\label{tab:schmitzFID}
	\begin{tabular}{lccccc}
		\hline
		Network numbers & $\mathscr{N}_S$ & $\mathscr{N}_{S,1}$ & $\mathscr{N}_{S,2}$ & $\mathscr{N}_{S,3}$ & $\mathscr{N}_{S,4}$ \\
		\hline
		Species & 11 & 8 & 3 & 2 & 2 \\
		Complexes & 16 & 11 & 2 & 2 & 2 \\
		Reactant complexes & 14 & 9 & 2 & 2 & 2 \\
		Reversible reactions & 6 & 3 & 1 & 1 & 1 \\
		Irreversible reactions & 5 & 5 & 0 & 0 & 0 \\
		Reactions & 17 & 11 & 2 & 2 & 2 \\
		Linkage classes (LC) & 5 & 3 & 1 & 1 & 1 \\
		Strong LC & 10 & 8 & 1 & 1 & 1 \\
		Terminal strong LC & 5 & 3 & 1 & 1 & 1 \\
		Rank & 9 & 6 & 1 & 1 & 1 \\
		Deficiency & 2 & 2 & 0 & 0 & 0 \\
		\hline
	\end{tabular}
\end{table}

We note the wide variation among the subnetworks of the three networks. Only the subnetworks $\mathscr{N}_{M,2}$ and $\mathscr{N}_{S,2}$ coincide. This variation is, of course, due to the FIDs being directly based on the (different) reaction sets. The FID is, thus, the only structural property that differentiates the three multi-stationary networks so far. Though FID is a network property that shows big differences between the three models, FID also serves as the basis of the other techniques (EP, CORE, and CP) as we present in the subsequent sections.

\section{Equilibria parametrizations}
\label{equilibria:parametrizations}

In this section, we use the algorithm of Hernandez et al. \cite{HernandezetalPCOMP2023} to compute the parametrization of positive equilibria of the Schmitz and Feinberg networks, both of which are endowed with mass action kinetics. The algorithm combines the use of FIDs \cite{Hernandezetal2022,HEDC2021} and an earlier method by Johnston et al. \cite{JMP2019:parametrization} for equilibria parametrization (which was previously applied to the MacLean network, referred to as ``shuttled Wnt'' in \cite{JMP2019:parametrization}). Analysis of parametrized equilibria allows us to determine the presence of ACR species in a network. This concept of ACR was introduced by Shinar and Feinberg in the Science journal \cite{SHFE2010}, where they described ACR as the capacity of a species to have the same value at every positive steady state of the system. Generally speaking, in ACR, different sets of initial conditions yield the same steady state value for the species.

\subsection{A brief review of the equilibria parametrization algorithm via network decomposition}

The following summary of the parameterization algorithm for positive equilibria of Hernandez et al. \cite{HernandezetalPCOMP2023} shows the significance of FIDs in the procedure.

Utilizing the Feinberg Decomposition Theorem \cite{Feinberg1987stability,FeinbergBook2019}, which states that {\it{the set of positive equilibria of the whole network is equal to the intersection of the sets of positive equilibria of its stoichiometrically-independent subnetworks}}, we first break the CRN into its smallest independent subnetworks (under the FID). This allows us to compute more easily the parametrized positive equilibria of each subnetwork. We then merge these results to obtain the parametrized positive equilibria of the entire network. The computed positive equilibria of the species {\it{common}} to some subnetworks are equated to each other to get the positive equilibria parametrization of the whole network.

As an illustration, suppose that a given network $\mathscr{N}$ (with three species $X_1$, $X_2$, and $X_3$) has two smaller independent subnetworks under the FID, say, $\mathscr{N}_1$ (with species $X_1$ and $X_2$) and $\mathscr{N}_2$ (with species $X_2$ and $X_3$). We compute the parametrized positive equilibria of each subnetwork independently. The parametrized positive equilibria of $\mathscr{N}$ contain the computed parametrization of $x_1$ (from subnetwork $\mathscr{N}_1$), $x_3$ (from subnetwork $\mathscr{N}_2$), and the solution to $x_2$ after equating the computed parametrizations from subnetworks $\mathscr{N}_1$ and $\mathscr{N}_2$.

\subsection{Equilibria parametrization of the Schmitz system}

Following the method outlined above, we are able to compute a parametrization of the positive equilibria of the Schmitz network ($\mathscr{N}_S$) as follows:
\allowdisplaybreaks
\begin{align*}
    a_{1}&:=y_a=\dfrac{\sigma_1(k_5+k_{10})}{k_4 k_{10}}\\
    a_{2}&:=y_i=\dfrac{k_{14}\sigma_1(k_{5}+k_{10})}{k_4 k_{10} k_{15}}\\
        a_{3}&:=y_{an}=\dfrac{\sigma_2(k_{7}+k_{11})}{k_6 k_{11}}\\
    a_{4}&:=x=\dfrac{k_1(k_3+\sigma_2)}{k_2 \sigma_2 + k_3 \sigma_1 + \sigma_1 \sigma_2}\\
    a_{5}&:=x_{n}=\dfrac{k_1 k_2}{k_2 \sigma_2 + k_3 \sigma_1 + \sigma_1 \sigma_2}\\
    a_{6}&:=t=\tau_2\\
    a_{7}&:=c_{XT}=\dfrac{k_1 k_2 k_{8}\tau_2}{k_{9}(k_2 \sigma_2 + k_3 \sigma_1 + \sigma_1 \sigma_2)}\\
    a_{8}&:=c_{XY}=\dfrac{k_1 \sigma_1(k_3+\sigma_2)}{k_{10}(k_2 \sigma_2 + k_3 \sigma_1 + \sigma_1 \sigma_2)}\\
    a_{9}&:=c_{XYn}=\dfrac{k_1 k_2\sigma_2}{k_{11}(k_2 \sigma_2 + k_3 \sigma_1 + \sigma_1 \sigma_2)}\\
        a_{10}&:=x_{P}=\dfrac{k_1 \sigma_1(k_3+\sigma_2)}{k_{12}(k_2 \sigma_2 + k_3 \sigma_1 + \sigma_1 \sigma_2)}\\
    a_{11}&:=x_{pn}=\dfrac{k_1 k_2\sigma_2}{k_{13}(k_2 \sigma_2 + k_3 \sigma_1 + \sigma_1 \sigma_2)}
\end{align*}
where $\sigma_2=\dfrac{k_{16} k_6 k_{11} (k_5+k_{10}) \sigma_1}{k_{17} k_4 k_{10} (k_{7}+k_{11})}$ and $\sigma_1,\tau_2>0$.

\subsection{Equilibria parametrization of the Feinberg system}
\label{eq:param:FAL}

A positive equilibria parametrization of the Feinberg network ($\mathscr{N}_F$) is as follows:
\allowdisplaybreaks
\begin{align*}
a_1 &= \dfrac{a_2 k_{15}}{k_{14}}\\
a_2 &= \dfrac{k_{55} \sigma_2 a_{23}}{k_{54} (k_{53}+\sigma_2)}\\
a_4 &= \dfrac{k_1 k_{14} (k_5 + k_{45})}{k_{38} k_5 k_{14} + k_{38} k_{14} k_{45} + a_2 k_4 k_{15} k_{45}}\\
a_6 &= \dfrac{a_7 k_{50} (k_{38} k_5 k_{14} + k_{38} k_{14} k_{45} + a_2 k_4 k_{15} k_{45})}{k_1 k_{14} k_{49} (k_5 + k_{45})} \\
a_8 &= \dfrac{a_2 k_1 k_4 k_{15}}{k_{38} k_5 k_{14} + k_{38} k_{14} k_{45} + a_2 k_4 k_{15} k_{45}}\\
a_{10} &= \dfrac{a_2 k_1 k_4 k_{15} k_{45}}{k_{12} (k_{38} k_5 k_{14} + k_{38} k_{14} k_{45} + a_2 k_4 k_{15} k_{45})}\\
a_{12} &= \dfrac{a_{13} k_{19}}{k_{18}}\\
a_{13} &= \dfrac{k_{54}}{\sigma_2}\\
a_{24} &= \dfrac{a_{27} k_{52} (k_{38} k_5 k_{14} + k_{38} k_{14} k_{45} + a_2 k_4 k_{15} k_{45})}{k_1 k_{14} k_{51} (k_5 + k_{45})}\\
a_{25} &= \dfrac{a_2 k_1 k_4 k_{15} k_{45}}{k_{46} (k_{38} k_5 k_{14} + k_{38} k_{14} k_{45} + a_2 k_4 k_{15} k_{45})}\\
a_{26} &= \dfrac{k_{47}}{k_{48}}\\
a_{27} &= \dfrac{a_{23} k_1 k_{14} k_{44} k_{48} k_{51} (k_5 + k_{45})}{k_{43} k_{47} k_{52} (k_{38} k_5 k_{14} + k_{38}k_{14} k_{45} + a_2 k_4 k_{15} k_{45})}\\
a_{28} &= \dfrac{a_{23} k_{53} k_{55}}{k_{56} (k_{53}+\sigma_2)}
\end{align*}
where $\sigma_2,a_7,a_{23}>0$.

\subsection{ACR in the multi-stationary Wnt signaling systems}

The following proposition describes ACR in the three multi-stationary systems under consideration.

\begin{proposition}
The following statements describe the ACR properties of the MacLean, Schmitz, and Feinberg systems:
\begin{enumerate}
    \item[i.] The MacLean and Schmitz mass action systems lack ACR in any species.
    \item[ii.] In the Feinberg mass action system, only $A_{26}$ (i.e., axin) has ACR.
\end{enumerate}
\end{proposition}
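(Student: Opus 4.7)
The plan is to read off ACR directly from the equilibria parametrizations: a species $A_i$ has ACR in a mass action system iff its concentration takes the same value at every positive equilibrium, and once a parametrization of the positive equilibria set is given in terms of free parameters ranging over $(0,\infty)$, this is equivalent to $a_i$ being a constant function of the free parameters (equivalently, a pure rational expression in the rate constants). Any non-trivial dependence of $a_i$ on a free parameter $p$ yields, by varying $p$, two positive equilibria with distinct values of $a_i$, which rules out ACR for $A_i$.

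For claim (ii), I would apply this criterion to the parametrization of $\mathscr{N}_F$ in Section \ref{eq:param:FAL}, whose free parameters are $\sigma_2, a_7, a_{23}>0$. A direct reading shows that $a_{26}=k_{47}/k_{48}$ is independent of all three free parameters, so $A_{26}$ has ACR. For every remaining species I would point to a specific free parameter on which the expression is non-constant: $a_{13}=k_{54}/\sigma_2$ is monotone in $\sigma_2$; $a_2$ and $a_{28}$ are non-constant in both $\sigma_2$ and $a_{23}$; $a_6$ is linear in $a_7$; and $a_1, a_4, a_8, a_{10}, a_{12}, a_{24}, a_{25}, a_{27}$ inherit a non-constant dependence through one of $a_2, a_{13}, a_{23}, a_{27}$, or $a_7$. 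Therefore no species other than $A_{26}$ has ACR in $\mathscr{N}_F$.

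For claim (i) on the Schmitz side, I would run the identical check on the parametrization of $\mathscr{N}_S$ just established, whose free parameters are $\sigma_1,\tau_2>0$ (with $\sigma_2$ a strictly monotone linear function of $\sigma_1$). Every $a_i$ is visibly a non-constant rational function of $\sigma_1$ or $\tau_2$: $a_6=\tau_2$ takes care of $A_6$, $a_1$ and $a_2$ are affine non-constant in $\sigma_1$, $a_3$ is monotone in $\sigma_2$ (hence in $\sigma_1$), and $a_4, a_5, a_7, a_8, a_9, a_{10}, a_{11}$ all carry $\sigma_1,\sigma_2,$ or $\tau_2$ in their numerators or denominators and do not simplify to pure ratios of rate constants. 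Hence $\mathscr{N}_S$ has no ACR species. For the MacLean system I would invoke the parametrization of the shuttle Wnt network of Johnston et al. \cite{JMP2019:parametrization} (or, if one prefers a self-contained argument, recompute one via the FID of $\mathscr{N}_M$ in Table \ref{tab:MacLeanFID} using the algorithm of Hernandez et al. \cite{HernandezetalPCOMP2023}) and verify that no species expression collapses to a ratio involving rate constants alone.

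The essentially only obstacle is the MacLean case, whose parametrization is not displayed in the present excerpt; once it is in hand, the argument reduces to the same case-by-case inspection as for $\mathscr{N}_F$ and $\mathscr{N}_S$. One small technical point worth stating explicitly is that each free parameter produced by the decomposition-based algorithm of \cite{HernandezetalPCOMP2023} genuinely sweeps out an open subset of $(0,\infty)$, so that non-constant dependence on a free parameter does witness two distinct positive equilibria with different values of the relevant species. Beyond this bookkeeping, the proof is a routine verification.
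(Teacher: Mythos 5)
Your proposal is correct and follows essentially the same route as the paper: the paper likewise reads ACR directly off the equilibria parametrizations, reproducing the Johnston et al.\ parametrization of the shuttled Wnt (MacLean) network inside the proof and observing that only $a_{26}=k_{47}/k_{48}$ in the FAL parametrization depends solely on rate constants, while every species concentration in the Schmitz and MacLean parametrizations involves a free parameter. Your added remark that non-constant dependence on a free parameter genuinely witnesses two distinct positive equilibria is a reasonable explicit justification of a step the paper leaves implicit.
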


\begin{proof}
        To show $(i)$, we reproduce here the equilibria parametrization of Johnston et al. for $\mathscr{N}_M$:
    \allowdisplaybreaks
    \begin{align*}
a_1 &:= y_a = \frac{K_1}{K_2}\\
a_2 &:= y_i = \frac{k_{23}}{k_{22}} \frac{k_{28}+k_{29}}{k_{27}} \frac{\tau_{13}}{\tau_{12}}\\
a_{3} &:= y_{an} = \frac{K_4}{K_3}\\
a_{4} &:=x = \frac{K_2}{K_6}\\
a_{5} &:=x_{n} = \frac{K_3}{K_6}\\
a_{6} &:=t = d_{12}\\
a_{7} &:=c_{XT} = \frac{k_8}{k_9} \frac{K_3}{K_6} d_{12}\\
a_{8} &:= c_{XY} = \frac{K_5}{K_6}\\
a_{9} &:=c_{XYn} = \frac{K_7}{K_6}\\
a_{12} &:= d_i = \frac{k_{19}}{k_{18}} \frac{k_{21}}{k_{20}} \frac{k_{25}+k_{26}}{k_{24} k_{26}} k_{29} \frac{K_{3}}{K_{4}} \tau_{13}\\
a_{13} &:= d_a = \frac{k_{21}}{k_{20}} \frac{k_{25}+k_{26}}{k_{24} k_{26}} k_{29} \frac{K_{3}}{K_{4}} \tau_{13}\\
a_{14} &:= d_{an} = \frac{k_{25}+k_{26}}{k_{24} k_{26}} k_{29} \frac{K_{3}}{K_{4}} \tau_{13}\\
a_{15} &:=y_{in} = \frac{k_{28}+k_{29}}{k_{27}}\frac{\tau_{13}}{\tau_{12}}\\
a_{16} &:=p = 
\frac{k_{21}}{k_{20}}\frac{k_{22}}{k_{23}} \frac{k_{29}}{k_{35}}\frac{k_{27}}{k_{24} k_{26}}
\frac{k_{25}+k_{26}}{k_{28}+k_{29}}\frac{k_{30} k_{32}}{k_{33}}
\frac{k_{34}+k_{35}}{k_{31}+k_{32}}
\left(\frac{K_1 K_{3}}{K_2 K_{4}} \right)\tau_{12}
\\
a_{17} &:=p_n = \tau_{12}\\
a_{18} &:= c_{YD} = 
\frac{k_{21}}{k_{20}} \frac{k_{29} k_{30}}{k_{24} k_{26}} \frac{k_{25} + k_{26}}{k_{31} + k_{32}}
\left(\frac{K_1 K_{3}}{K_2 K_{4}} \right)\tau_{13}
\\
a_{19} &:=c_{YDn} = \frac{k_{29}}{k_{26}} \tau_{13}\\
a_{20} &:=c_{YP} = \frac{k_{32}}{k_{35}}
\frac{k_{21}}{k_{20}} \frac{k_{29}k_{30} }{k_{24} k_{26}}
\frac{k_{25} + k_{26}}{k_{31} + k_{32}}
\left(\frac{K_1 K_{3}}{K_2 K_{4}} \right)\tau_{13}
\\
a_{21} &:= c_{YPn} = \tau_{13}
\end{align*}
where
\allowdisplaybreaks
\begin{align*}
K_1 &= (k_{5}+k_{36})\sigma_1 k_{6} k_{1} (k_{2} k_{7} + k_{2} k_{37} + \sigma_2 k_{37} + k_{39} k_{37} + k_{3} k_{7} + k_{39} k_{7}) \\
K_2 &= k_{4} k_{6} k_{1}(k_{5}+k_{36}) (k_{37} \sigma_2+(k_{3}+k_{39})(k_{7}+k_{37})) \\
K_3 &= k_{4} k_{6} k_{2} k_{1} (k_{5}+k_{36})(k_{7}+k_{37}) \\
K_4 &= k_{4} k_{2} \sigma_2 k_{1} (k_{5}+k_{36})(k_{7}+k_{37})\\
K_5 &= k_{4} \sigma_1 k_{6} k_{1} (k_{2} k_{7} + k_{2} k_{37} + \sigma_2 k_{37} + k_{39} k_{37} + k_{3} k_{7} + k_{39} k_{7})\\
K_6 &= k_{4} k_{6}\left(k_{36} \sigma_1 (k_{7} k_{2}+k_{7} k_{3}+k_{37} k_{2}+k_{7} k_{39}+k_{37} k_{39}+k_{37} \sigma_2)\right.\\
&+(k_{5}+k_{36})(k_{37} \sigma_2 k_{2}+k_{7} k_{39} k_{2}+k_{37} k_{39} k_{2}+k_{37} \sigma_2 k_{38}+k_{7} k_{3} k_{38}\\
&+\left. k_{7} k_{39} k_{38}+k_{37} k_{3} k_{38}+k_{37} k_{39} k_{38})\right)\\
K_7 &= k_{4} k_{6} k_{2} \sigma_2 k_{1} (k_{5}+k_{36})
\end{align*}
and $\sigma_1, \sigma_2, d_{12}, \tau_{12}, \tau_{13} > 0$.

In the parametrizations of the positive equilibria of $\mathscr{N}_S$ and $\mathscr{N}_M$, no equilibrium concentration entirely depends on the rate constants $k_i$. In fact, all species concentrations depend on free parameters. Note that various combinations of these free parameters correspond to various sets of initial conditions. 
Hence, the two models do not have species that exhibit ACR. 

For $(ii)$, we can easily observe from the equilibria parametrization of Feinberg in Section \ref{eq:param:FAL} that only $a_{26}$ depends entirely on rate constants. Hence, the only ACR species in the Feinberg mass action system is $A_{26}$.
\end{proof}

\section{CORE analysis of the augmented Schmitz and the MacLean networks}

The CORE analysis in this section begins with the observation that the set of common reactions of Schmitz and MacLean is contained in the union of the FID subnetworks $\mathscr{N}_{S,1}$ $\cup$ $\mathscr{N}_{S,3}$ and $\mathscr{N}_{M,1}$ $\cup$ $\mathscr{N}_{M,3}$. This opens the possibility of relating any positive equilibria of the subnetwork generated by the set of common reactions to those of the two networks. 

\begin{remark}
    {We will use the notation $\langle \mathcal{R} \rangle$ to denote the reaction network generated by the set of reactions $\mathcal{R}$.}
\end{remark}

Let us define the augmented Schmitz network $\mathscr{N}_{SA}$ as the union of $\mathscr{N}_{S}$ and $\langle \{ A_4 \to 0, 0 \to A_{10}, 0 \to A_{11} \} \rangle$. $\mathscr{N}_{SA}$ shares all the basic structural and kinetic properties of $\mathscr{N}_S$, including its multi-stationarity. Their FIDs differ only in one subnetwork $\mathscr{N}_{SA,1}$, which is equal to $\mathscr{N}_{S,1}$ plus the three additional flow reactions. {Flow reactions are reactions of the form $A \to 0$ or $0 \to A$.}

We can now state the main result of this section:

\begin{theorem}
Let $\mathscr{N}_{SAM}:= \langle \mathscr{R}_{SA} \cap \mathscr{R}_{M} \rangle$ be the subnetwork of common reactions of the augmented Schmitz and MacLean networks. Then we have the following:
\begin{enumerate}
    \item[i.] $\mathscr{N}_{SAM}$ is a reversible and deficiency zero network.
    \item[ii.] The set of positive equilibria of $\mathscr{N}_{SAM}$  induces a subset of positive equilibria of $\mathscr{N}_{M}$.
    \item[iii.] A subset of positive equilibria of $\mathscr{N}_{SAM}$ induces a subset of positive equilibria of $\mathscr{N}_{SA}$.
\end{enumerate} 
\label{theorem:equilibria:networks}
\end{theorem}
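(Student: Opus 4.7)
I would address the three claims in turn, using the structural data of Section~\ref{constr:analysis:RN}, the Deficiency Zero Theorem, and Feinberg's Decomposition Theorem applied to the FIDs computed in Section~\ref{section:comparison:three:models}.

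\emph{For (i),} the first step is to identify $\mathscr{N}_{SAM}$ explicitly. Comparing Table~\ref{tab:reactionsNSandNM} against the definition of $\mathscr{N}_{SA}$, and noting that among the three added flows only $A_4 \to 0 = R_{38}$ also lies in $\mathscr{R}_M$, one finds $\mathscr{R}_{SA} \cap \mathscr{R}_M = \{R_1,\ldots,R_9,R_{38}\}$. These ten reactions organize as the five reversible pairs $\{R_1,R_{38}\}$, $\{R_2,R_3\}$, $\{R_4,R_5\}$, $\{R_6,R_7\}$, $\{R_8,R_9\}$, giving reversibility at once. The routine count of complexes ($n=9$: $0,A_4,A_5,A_1{+}A_4,A_8,A_3{+}A_5,A_9,A_5{+}A_6,A_7$), linkage classes ($\ell=4$), and rank ($s=5$, with the five reaction vectors $e_4$, $e_5-e_4$, $e_8-e_1-e_4$, $e_9-e_3-e_5$, $e_7-e_5-e_6$ clearly independent) then yields $\delta = n-\ell-s = 0$.

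\emph{For (ii),} with (i) in place, the Deficiency Zero Theorem guarantees that $\mathscr{N}_{SAM}$ under mass action admits a rich family of positive, complex-balanced equilibria for every choice of positive rate constants. The key structural fact is that $\mathscr{R}_{SAM}$ sits inside the FID of $\mathscr{N}_M$ via $\mathscr{R}_{SAM}\cap\mathscr{R}_{M,1} = \{R_1,\ldots,R_7,R_{38}\}$ and $\mathscr{R}_{SAM}\cap\mathscr{R}_{M,2} = \mathscr{R}_{M,2} = \{R_8,R_9\}$. Feinberg's Decomposition Theorem identifies positive equilibria of $\mathscr{N}_M$ with the common positive zeros of the species-formation-rate functions of $\mathscr{N}_{M,1},\ldots,\mathscr{N}_{M,7}$. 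My plan is to show that, from any given $\mathscr{N}_{SAM}$ equilibrium, one can build a compatible assignment of positive values to the non-$\mathscr{S}_{SAM}$ species ($A_2$ and $A_{12},\ldots,A_{21}$) so that each $\mathscr{N}_{M,i}$ simultaneously equilibrates. Each $\mathscr{N}_{M,i}$ with $i\ge 3$ is reversible of low deficiency with its own independently parametrizable positive equilibria, and the nonzero contributions of the extras $R_{36},R_{37},R_{39}$ in $\mathscr{R}_{M,1}\setminus \mathscr{R}_{SAM}$ are to be absorbed by tuning the free parameters in the subnetworks touching $A_1$ and $A_3$ (namely $\mathscr{N}_{M,6}$ and $\mathscr{N}_{M,7}$). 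The resulting correspondence from $E_+(\mathscr{N}_{SAM})$ into $E_+(\mathscr{N}_M)$ is the induced subset.

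\emph{For (iii),} the same outline applies with the FID of $\mathscr{N}_{SA}$ replacing that of $\mathscr{N}_M$. As noted in the paragraph preceding the theorem, this FID coincides with that of $\mathscr{N}_S$ except that $\mathscr{N}_{SA,1}$ absorbs the three added flow reactions. The subtlety here is that the reactions in $\mathscr{R}_{SA}\setminus\mathscr{R}_{SAM}$ acting on $\mathscr{S}_{SAM}$-species---particularly $R_{10},R_{11},R_{16},R_{17}$---impose additional balance constraints (for instance $k_{16}a_1 = k_{17}a_3$ and conditions tying $a_8,a_9$ to the outflows through $A_{10},A_{11}$) that are not satisfied at every $\mathscr{N}_{SAM}$ equilibrium. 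I would characterize the admissible subset as exactly those $\mathscr{N}_{SAM}$ equilibria for which these ratio/flux constraints hold, then produce an $\mathscr{N}_{SA}$ equilibrium for each by using the reversible-pair subnetworks $\mathscr{N}_{SA,3}=\{R_{14},R_{15}\}$ and $\mathscr{N}_{SA,4}=\{R_{16},R_{17}\}$ to determine $a_2$ and to realise the required $a_1/a_3$ ratio, together with the augmented $\mathscr{N}_{SA,1}$ to solve for $a_{10},a_{11}$.

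The main obstacle running through (ii) and (iii) is the compatibility problem introduced by reactions outside $\mathscr{N}_{SAM}$ that nonetheless act on species already pinned by a $\mathscr{N}_{SAM}$ equilibrium: $R_{36},R_{37},R_{39}$ in the case of $\mathscr{N}_M$, and $R_{10},R_{11},R_{16},R_{17}$ in the case of $\mathscr{N}_{SA}$. These reactions contribute nonzero fluxes that must be reconciled against the complex-balanced rates of $\mathscr{N}_{SAM}$ through the parametric freedom of the remaining FID subnetworks. In (ii) this freedom is ample enough to handle every $\mathscr{N}_{SAM}$ equilibrium, whereas in (iii) it is strictly smaller, so only a proper subset survives---precisely matching the wording of the theorem.
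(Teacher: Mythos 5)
Your part (i) is correct and agrees with the paper (which simply reads it off Table \ref{tab:reactionsNSandNM}): the identification $\mathscr{R}_{SA}\cap\mathscr{R}_M=\{R_1,\dots,R_9,R_{38}\}$, the five reversible pairs, and the count $\delta=9-4-5=0$ are all right.

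The genuine gap is in your mechanism for (ii). You propose that the nonzero fluxes of $R_{36},R_{37},R_{39}\in\mathscr{R}_{M,1}\setminus\mathscr{R}_{SAM}$ be ``absorbed by tuning the free parameters'' in $\mathscr{N}_{M,6}$ and $\mathscr{N}_{M,7}$. This cannot work, for two reasons. First, by the very decomposition theorem you invoke, at a positive equilibrium of $\mathscr{N}_M$ each subnetwork of the independent decomposition must equilibrate \emph{separately}; no cross-subnetwork cancellation of fluxes is available. Second, every species of $\mathscr{N}_{M,1}$ (namely $A_1,A_3,A_4,A_5,A_8,A_9$) already lies in $\mathscr{S}_{SAM}$, so once a $\mathscr{N}_{SAM}$ equilibrium is fixed there is nothing left to tune inside $\mathscr{N}_{M,1}$: its species formation rate at that point is determined and is generically nonzero precisely because of $R_{36},R_{37},R_{39}$. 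The paper closes this gap with a different tool, the lifting theorem of Joshi and Shiu \cite{JOSH2013}: since $\mathscr{N}_{SAM}$ is reversible and deficiency zero, its positive equilibria are complex balanced and nondegenerate, and since adjoining $R_{36},R_{37},R_{39}$ does not enlarge the stoichiometric subspace (both $\mathscr{N}_{SAM}$ and $\mathscr{N}_{M,1}\cup\mathscr{N}_{M,2}$ have rank $5$, which the paper checks for exactly this reason), a perturbation argument with small rate constants on the added reactions yields nearby positive equilibria of $\mathscr{N}_{M,1}\cup\mathscr{N}_{M,2}$. These are then extended to equilibria of all of $\mathscr{N}_M$ by matching with an equilibrium of the complementary subnetwork $\mathscr{N}_{M,3}\cup\cdots\cup\mathscr{N}_{M,7}$ on the shared species $A_1,\dots,A_5$ (Lemma 3 of Lubenia et al. \cite{LUML2021}). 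Your part (iii) inherits the same problem for $R_{10}$ and $R_{11}$, which consume the pinned species $A_8,A_9$; the paper handles this by re-decomposing $\mathscr{N}_{SA,1}\cup\mathscr{N}_{SA,2}$ so that $\mathscr{N}_{SAM}$ and the flow subnetwork $\langle A_{10}\rightleftarrows 0\rightleftarrows A_{11}\rangle$ sit inside an independent decomposition, and then arguing as in (ii). Your instinct that only a constrained subset of $\mathscr{N}_{SAM}$ equilibria survives in (iii) is correct, but the argument must be routed through this independence-plus-lifting machinery rather than through flux bookkeeping across FID blocks.
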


\begin{proof}
    $(i)$ follows directly from Table \ref{tab:reactionsNSandNM}. $(ii)$ Since all the reactions of $\mathscr{N}_{SAM}$ are contained in $\mathscr{N}_{M,1}$ $\cup$ $\mathscr{N}_{M,2}$, then the stoichiometric subspace $S_{SAM}$ is a subspace of $S_{M,1}+S_{M,2}$. Since the rank of $\mathscr{N}_{SAM}$ is 5, which is equal to the rank of $\mathscr{N}_{M,1}$ $\cup$ $\mathscr{N}_{M,2}$, the spaces coincide. According to the Deficiency Zero Theorem (page 89 of \cite{FeinbergBook2019}), $\mathscr{N}_{SAM}$ has a unique, complex balanced, and stable equilibrium in each stoichiometric class.
    A result of Joshi and Shiu in \cite{JOSH2013} implies that the equilibria of $\mathscr{N}_{SAM}$ can be lifted to the equilibria of $\mathscr{N}_{M,1} \cup \mathscr{N}_{M,2}$. According to Lemma 3 of Lubenia et al. \cite{LUML2021}, such an equilibrium is an equilibrium for $\mathscr{N}_{M}$ if and only if there is an equilibrium of the complementary subnetwork $\mathscr{N}_{M,3} \cup \ldots \cup \mathscr{N}_{M,7}$  whose components in the common species with $\mathscr{N}_{M,1} \cup \mathscr{N}_{M,2}$ coincide with its components (the common species of the two independent subnetworks are $A_1, \ldots, A_5$).
    $(iii)$ Through the additional inflows, we obtain an independent decomposition $\mathscr{N}_{SA,1} \cup \mathscr{N}_{SA,2} = \mathscr{N}_{SAM} \cup \langle \{ A_{10} \rightleftarrows 0 \rightleftarrows A_{11} \} \rangle$
    so that the positive equilibria of $\mathscr{N}_{SAM}$ that are also the equilibria of $\langle \{ A_{10} \rightleftarrows 0 \rightleftarrows A_{11} \} \rangle$ are the equilibria of the union of FID subnetworks. The remaining argument is identical to that of $(ii)$.
\end{proof}

\begin{table}[ht!]
	\centering
	\caption{Species and reactions of the Schmitz ($\mathscr{N}_S$) and MacLean ($\mathscr{N}_M$) Wnt signaling models}
	\label{tab:reactionsNSandNM}
	\begin{tabular}{|l|l|}
		\hline
		\multicolumn{2}{|c|}{Common to $\mathscr{N}_S$ and $\mathscr{N}_M$} \\
            \hline
            \multicolumn{2}{|c|}{$A_1$, \dots, $A_9$} \\
		\hline
		\multicolumn{2}{|l|}{\hspace{2 cm} $R_1: 0 \rightarrow A_4$} \\
		\multicolumn{2}{|l|}{\hspace{2 cm} $R_2: A_4 \rightarrow A_5$} \\
		\multicolumn{2}{|l|}{\hspace{2 cm} $R_3: A_5 \rightarrow A_4$} \\
		\multicolumn{2}{|l|}{\hspace{2 cm} $R_4: A_1+A_4 \rightarrow A_8$} \\
		\multicolumn{2}{|l|}{\hspace{2 cm} $R_5: A_8 \rightarrow A_1 + A_4$} \\
		\multicolumn{2}{|l|}{\hspace{2 cm} $R_6: A_5+A_3 \rightarrow A_9$} \\
		\multicolumn{2}{|l|}{\hspace{2 cm} $R_7: A_9 \rightarrow A_5 + A_3$} \\
		\multicolumn{2}{|l|}{\hspace{2 cm} $R_8: A_6+A_5 \rightarrow A_7$} \\
		\multicolumn{2}{|l|}{\hspace{2 cm} $R_9: A_7 \rightarrow A_6 + A_5$} \\
		\hline
            \hline
		\multicolumn{1}{|c|}{Unique to $\mathscr{N}_S$} & \multicolumn{1}{c|}{Unique to $\mathscr{N}_M$} \\
            \hline
            \multicolumn{1}{|c|}{$A_{10}$, $A_{11}$} & \multicolumn{1}{c|}{$A_{12}$, \dots, $A_{21}$} \\
		\hline
		$R_{10}: A_8 \rightarrow A_1+A_{10}$ & $R_{18}: A_{12} \rightarrow A_{13}$ \\
		$R_{11}: A_9 \rightarrow A_3+A_{11}$ & $R_{19}: A_{13} \rightarrow A_{12}$ \\
		$R_{12}: A_{10} \rightarrow 0$ & $R_{20}: A_{13} \rightarrow A_{14}$ \\
		$R_{13}: A_{11} \rightarrow 0$ & $R_{21}: A_{14} \rightarrow A_{13}$ \\
		$R_{14}: A_1 \rightarrow A_2$ & $R_{22}: A_2 \rightarrow A_{15}$ \\
		$R_{15}: A_2 \rightarrow A_1$ & $R_{23}: A_{15} \rightarrow A_2$ \\
		$R_{16}: A_1 \rightarrow A_3$ & $R_{24}: A_3 + A_{14} \rightarrow A_{19}$ \\
		$R_{17}: A_3 \rightarrow A_1$ & $R_{25}: A_{19} \rightarrow A_3 + A_{14}$ \\
            & $R_{26}: A_{19} \rightarrow A_{14} + A_{15}$ \\
    	& $R_{27}: A_{15} + A_{17} \rightarrow A_{21}$ \\
    	& $R_{28}: A_{21} \rightarrow A_{15} + A_{17}$ \\
    	& $R_{29}: A_{21} \rightarrow A_3 + A_{17}$ \\
            & $R_{30}: A_{13} + A_1 \rightarrow A_{18}$ \\
    	& $R_{31}: A_{18} \rightarrow A_{13} +A_1$ \\
    	& $R_{32}: A_{18} \rightarrow A_{13} +A_2$ \\
    	& $R_{33}: A_2 + A_{16} \rightarrow A_{20}$ \\
    	& $R_{34}: A_{20} \rightarrow A_2 + A_{16}$ \\
    	& $R_{35}: A_{20} \rightarrow A_1 + A_{16}$ \\
    	& $R_{36}: A_8 \rightarrow A_1$ \\
    	& $R_{37}: A_9 \rightarrow A_3$ \\
    	& $R_{38}: A_4 \rightarrow 0$ \\
    	& $R_{39}: A_5 \rightarrow 0$ \\
		\hline
	\end{tabular}
\end{table}

\begin{remark}
\begin{enumerate}
    \item The additional reactions in the augmented Schmitz network do not change the stoichiometric subspaces since their reaction vectors are already contained in the stoichiometric subspace of the Schmitz network.
    \item The mass action kinetics considered in $(ii)$ and $(iii)$ are identical except in their {respective domains}.
    \item Statement $(ii)$ easily generalizes to positive dependent subnetworks comprised of unions of FID subnetworks (which are not the whole network). 
    \item The results in Theorem \ref{theorem:equilibria:networks} provide a further example of the usefulness of FIDs in comparative network analysis.
\end{enumerate}
\end{remark}

{\section{CORE analysis of the Feinberg and MacLean networks}}


Table \ref{tab:reactionsNFandNM} shows that the set of common reactions of $\mathscr{N}_F$ and $\mathscr{N}_M$ is as follows: $\{ R_1, R_4, R_5, R_{18}, R_{19}, R_{38} \}$. The following proposition describes the properties of the corresponding subnetwork.

\begin{table}[ht!]
	\centering
	\caption{Species and reactions of the Feinberg ($\mathscr{N}_F$) and MacLean ($\mathscr{N}_M$) Wnt signaling models}
	\label{tab:reactionsNFandNM}
	\begin{tabular}{|l|l|}
		\hline
		\multicolumn{2}{|c|}{Common to $\mathscr{N}_F$ and $\mathscr{N}_M$} \\
            \hline
            \multicolumn{2}{|c|}{$A_1$, $A_2$, $A_4$ $A_6$, $A_7$, $A_8$, $A_{12}$, $A_{13}$} \\
		\hline
		\multicolumn{2}{|l|}{\hspace{2 cm} $R_1: 0 \rightarrow A_4$} \\
		\multicolumn{2}{|l|}{\hspace{2 cm} $R_4: A_1+A_4 \rightarrow A_8$} \\
		\multicolumn{2}{|l|}{\hspace{2 cm} $R_5: A_8 \rightarrow A_1 + A_4$} \\
		\multicolumn{2}{|l|}{\hspace{2 cm} $R_{18}: A_{12} \rightarrow A_{13}$} \\
		\multicolumn{2}{|l|}{\hspace{2 cm} $R_{19}: A_{13} \rightarrow A_{12}$} \\
		\multicolumn{2}{|l|}{\hspace{2 cm} $R_{38}: A_4 \rightarrow 0$} \\
		\hline
            \hline
		\multicolumn{1}{|c|}{Unique to $\mathscr{N}_F$} & \multicolumn{1}{c|}{Unique to $\mathscr{N}_M$} \\
            \hline
            \multicolumn{1}{|c|}{$A_{10}$, $A_{23}$, \dots, $A_{28}$} & \multicolumn{1}{c|}{$A_3$, $A_5$, $A_9$, $A_{14}$, \dots, $A_{21}$} \\
		\hline
        $R_{12}: A_{10} \rightarrow 0$ & $R_{2}: A_{4} \rightarrow A_{5}$ \\
		$R_{14}: A_1 \rightarrow A_2$ & $R_{3}: A_{5} \rightarrow A_{4}$ \\
		$R_{15}: A_{2} \rightarrow A_1$ & $R_{6}: A_{5} +A_3 \rightarrow A_{9}$ \\
		$R_{43}: A_{24} + A_{26} \rightarrow A_{23}$ & $R_{7}: A_{9} \rightarrow A_{5}+A_3$ \\
		$R_{44}: A_{23} \rightarrow A_{24} + A_{26}$ & $R_{8}: A_6 +A_5 \rightarrow A_{7}$ \\
		$R_{45}: A_8 \rightarrow A_{25}$ & $R_{9}: A_{7} \rightarrow A_6 + A_5$ \\
		$R_{46}: A_{25} \rightarrow A_1 + A_{10}$ & $R_{20}: A_{13} \rightarrow A_{14}$ \\
		$R_{47}: 0 \rightarrow A_{26}$ & $R_{21}: A_{14} \rightarrow A_{13}$ \\
		$R_{48}: A_{26} \rightarrow 0$ & $R_{22}: A_2 \rightarrow A_{15}$ \\
		$R_{49}: A_4 + A_6 \rightarrow A_7$ & $R_{23}: A_{15} \rightarrow A_2$ \\
		$R_{50}: A_7 \rightarrow A_4 + A_6$ & $R_{24}: A_3 + A_{14} \rightarrow A_{19}$ \\
		$R_{51}: A_{24} + A_4 \rightarrow A_{27}$ & $R_{25}: A_{19} \rightarrow A_3 + A_{14}$ \\
        $R_{52}: A_{27} \rightarrow A_{24} + A_4$ & $R_{26}: A_{19} \rightarrow A_{14} + A_{15}$ \\
        $R_{53}: A_{13} + A_2 \rightarrow A_{28}$
    	& $R_{27}: A_{15} + A_{17} \rightarrow A_{21}$ \\
     $R_{54}: A_2 \rightarrow A_{23}$
    	& $R_{28}: A_{21} \rightarrow A_{15} + A_{17}$ \\
     $R_{55}: A_{23} \rightarrow A_2$
    	& $R_{29}: A_{21} \rightarrow A_3 + A_{17}$ \\
     $R_{56}: A_{28} \rightarrow A_{13} + A_{23}$
            & $R_{30}: A_{13} + A_1 \rightarrow A_{18}$ \\
    	& $R_{31}: A_{18} \rightarrow A_{13} +A_1$ \\
    	& $R_{32}: A_{18} \rightarrow A_{13} +A_2$ \\
    	& $R_{33}: A_2 + A_{16} \rightarrow A_{20}$ \\
    	& $R_{34}: A_{20} \rightarrow A_2 + A_{16}$ \\
    	& $R_{35}: A_{20} \rightarrow A_1 + A_{16}$ \\
    	& $R_{36}: A_8 \rightarrow A_1$ \\
    	& $R_{37}: A_9 \rightarrow A_3$ \\
    	& $R_{39}: A_5 \rightarrow 0$ \\
		\hline
	\end{tabular}
\end{table}

\begin{theorem}
    Let $\mathscr{N}_{FM} = \langle \{ R_1, R_4, R_5, R_{18}, R_{19}, R_{38} \} \rangle$ be the subnetwork of common reactions of $\mathscr{N}_F$ and $\mathscr{N}_M$. Then
    \begin{enumerate}
        \item[i.] $\mathscr{N}_{FM}$ is a reversible, deficiency zero subnetwork;
        \item[ii.] $\mathscr{N}_{FM}$ is contained in $\mathscr{N}_{F,1} \cup \mathscr{N}_{F,3}$, but is a dependent subnetwork of it; and
        \item[iii.] $\mathscr{N}_{FM}$ is contained in $\mathscr{N}_{M,1} \cup \mathscr{N}_{M,3}$, but is a dependent subnetwork of it.
    \end{enumerate}
\end{theorem}

\begin{proof}


    $(i)$ is evident from Table \ref{tab:reactionsNFandNM}.
    $(ii)$ As shown in Section \ref{section:comparison:three:models}, we have $\{ R_1, R_4, R_5, R_{38} \} \subset \mathscr{N}_{F,1}$ and $\{ R_{18}, R_{19} \} = \mathscr{N}_{F,3}$. However, the rank of $\mathscr{N}_{F,1} \cup \mathscr{N}_{F,3}$ is $4 + 1 = 5$ while the sum of the rank of $\mathscr{N}_{FM}$ and the rank of $(\mathscr{N}_{F,1} \cup \mathscr{N}_{F,3}) \backslash \mathscr{N}_{FM}$ is $3 + 3 = 6$. $(iii)$ Similarly, as shown in Section \ref{section:comparison:three:models}, $\{ R_1, R_4, R_5, R_{38} \} \subset \mathscr{N}_{M,1}$ and $\{ R_{18}, R_{19} \} = \mathscr{N}_{M,3}$. However, the rank of $\mathscr{N}_{M,1} \cup \mathscr{N}_{M,3}$ is $4 + 1 = 5$ while the sum of the rank of $\mathscr{N}_{FM}$ and the rank of $(\mathscr{N}_{M,1} \cup \mathscr{N}_{M,3}) \backslash \mathscr{N}_{FM}$ is $3 + 4 = 7$.
\end{proof}

In contrast to the equilibria of $\mathscr{N}_{SAM}$ being building blocks of the equilibria of $\mathscr{N}_{SA}$ and $\mathscr{N}_M$ (see the remark at the end of Section \ref{structural:property:FID}), the equilibria of $\mathscr{N}_{FM}$ are not building blocks of the equilibria of $\mathscr{N}_F$ and $\mathscr{N}_M$. 

\section{Concordance profile (CP) analysis of the Wnt networks}

Feinberg's multi-stationary network $\mathscr{N}_F$ has the remarkable property that removing a single reversible pair results in a concordant subnetwork. Concordance is a strong form of injectivity, and hence he dubbed this abrupt change of stability-related system properties an indictation of ``the subtlety of concordance''. The question of whether the other Wnt networks also possessed a high level of ``hidden concordance'' led us to develop concepts and procedures to quantify and measure this property using the network's FID and collect them in a method called Concordance Profile (CP) analysis. After a brief review of the concordance concepts and basic results, we introduce CP analysis and apply it to the Wnt reaction networks.

Concordant networks were introduced by G. Shinar and M. Feinberg \cite{SHFE2012} as an abstraction of continuous flow stirred tank reactors, a model that is extensively used in chemical engineering. 
The concept of concordance of a network, as described by them, refers to ``architectures that enforce duller and more restrictive behavior despite what might be the intricacy in the interplay of many species, even independent of the values that the kinetic parameters might take''. 
For detailed technical information on concordance, please refer to Appendix \ref{concordance:concept:properties}.

\subsection{Concordance dimension and concordance level of a reaction network}

The observation that Feinberg's network decomposed into the independent concordant subnetwork and the reversible pair pointed us to a network's FID as the starting point of our theory. After applying the Concordance Test of the CRNToolbox to each FID subnetwork, we define two subsets:

\begin{definition}
    The \emph{concordance (discordance) set} FIDC (FIDD) of the network is the set of all concordant (discordant) subnetworks of the FID. 
\end{definition}

Note that the FIDC or the FIDD can be empty.  For example, the network $\{ A_1 + 2 A_2 \rightleftarrows A_2 + 2 A_1 \}$ is discordant, and its FID is the trivial decomposition. Thus, its FIDC is $\varnothing$. On the other hand, the network $\{ A_2 \rightleftarrows A_3 \}$ is concordant and also has the trivial FID. Hence, its FIDD is $\varnothing$. Furthermore, the union of concordant networks need not be concordant even in an independent decomposition. This is shown by the Feinberg Wnt network.

If there is at least one concordant FID subnetwork, i.e., the FIDC is non-empty, we can define the following concepts:

\begin{definition}
    The concordance dimension (denoted by $c$) of a reaction network is the rank of a maximal independent concordant subnetwork. Such a subnetwork is called a concordance core. The number $d := s – c$ is called the discordance dimension of the network. The ratios $\frac{c}{s}$ and $\frac{d}{s}$ are called the concordance level and the discordance level of the network, respectively.
\end{definition}

The following remark collects basic properties and additional details about the concepts above:

\begin{remark}
    \begin{enumerate}
        \item If the concordance set (discordance set) is empty, we set $c = 0$ ($d = 0$).
        \item For any network, $0 \leq c$ and $d \leq s$.
        \item If the FIDC is non-empty, $c \geq \max (\mathrm{rank}(N_i))$, $N_i$ in the FIDC.
        \item There may be more than one concordance core in a reaction network. For example, the 2-site phosphorylation/dephosphorylation network \cite{CODH2018,Villareal2024} has two concordant cores of dimension $c = 1$ with $s = 2$.
    \end{enumerate}
\end{remark}


\begin{example}
    \begin{enumerate}
        \item[a.] For any concordant network, $c = s$, hence $\frac{c}{s} = 1$.
        \item[b.] For the Feinberg Wnt network, $c = 11$ while $s = 12$.  Hence, $\frac{c}{s} = 0.91$.
    \end{enumerate}
\end{example}

\subsection{CP analysis of the Wnt networks}

\subsubsection{Schmitz Wnt signaling network}

The Concordance Report generated by the Windows application CRNToolbox shows that all the FID subnetworks of $\mathscr{N}_S$ are concordant. Thus, the FIDC of $\mathscr{N}_S$ is
equal to the FID.

Recall that $\mathscr{N}_S$ is a discordant network. Removing the rank 1 subnetwork $\mathscr{N}_{S,4}$ results to a concordant subnetwork. However, when we remove the other rank 1 subnetworks $\mathscr{N}_{S,2}$ or $\mathscr{N}_{S,3}$ from $\mathscr{N}_S$, the resulting subnetwork remains discordant. Thus, the concordance core of $\mathscr{N}_S$ is $CC = \mathscr{N}_{S,1} \cup \mathscr{N}_{S,2} \cup \mathscr{N}_{S,3}$. The rank of $CC$ is 8, i.e., the concordance dimension is $c = 8$. This results to a concordance level of $\frac{c}{s} = \frac{8}{9} \approx 0.89$.

Despite its discordance, $\mathscr{N}_S$ seems to have a high degree of ``hidden concordance'' as evidenced by the concordance level of about 0.89.

\subsubsection{MacLean Wnt signaling network}

Similar to $\mathscr{N}_S$, the Concordance Report from the CRNToolbox shows that all FID subnetworks of $\mathscr{N}_M$ are concordant. Thus,
the FIDC equals the FID.

$\mathscr{N}_M$ is a discordant network. Removing any of the rank 1 subnetworks from $\mathscr{N}_M$ results to a discordant subnetwork. We then try to remove a combination of two rank 1 subnetworks: all combinations result to a discordant subnetwork except the removal of $\mathscr{N}_{M,4} \cup \mathscr{N}_{M,5}$ from $\mathscr{N}_M$ which gives rise to a concordant subnetwork. Thus, the FIDC of $\mathscr{N}_M$ is $CC = \mathscr{N}_{M,1} \cup \mathscr{N}_{M,2} \cup \mathscr{N}_{M,3} \cup \mathscr{N}_{M,6} \cup \mathscr{N}_{M,7}$. The rank of $CC$ is 12, i.e., the concordance dimension is $c = 12$. This results to a concordance level of $\frac{c}{s} = \frac{12}{14} \approx 0.86$.

Similar to $\mathscr{N}_S$, $\mathscr{N}_M$ has a high degree of ``hidden'' concordance (concordance level of about 0.86).

We conclude from CP analysis that the MacLean and Schmitz networks are more similar (concordance levels of 0.86 compared to 0.89) than the MacLean and Feinberg networks (0.86 versus 0.91).

\section{Summary and recommendation}
\label{sec:summary}

Through techniques such as the finest independent decompositions (FID), equilibria para- metrizations (EP), and our newly developed Common Reactions Equilibria (CORE) and Concordance Profile (CP) analyses, we have revealed significant findings in comparing reaction networks with respect to their sets of equilibria. In particular, we applied these techniques to Wnt signaling models in biochemistry.
First, we explored absolute concentration robustness (ACR) and found that, although the Feinberg model exhibits ACR in a specific species, both the Schmitz and MacLean models lack this property.
Second, our analyses using FID and CORE revealed important relationships between the equilibria sets of the augmented Schmitz and MacLean models.
Furthermore, we detected subtle differences
between the equilibria sets of Feinberg and MacLean models, 
which were not evident in standard reaction network analysis.
Lastly, based on the concordance levels, CP analysis suggests that the MacLean and Schmitz networks are more similar than the MacLean and Feinberg networks.
Moving forward, these findings offer valuable insights into the behavior of Wnt signaling networks, and comparative analysis of biochemical models, in general. 

\section*{Acknowledgement}
BSH acknowledge the Personally Funded Research support from the Institute of Mathematics, College of Science, and the Office of the Vice Chancellor for Research and Development of the University of the Philippines Diliman. BSH was awarded the MacArthur \& Josefina De los Reyes Professorial Chair in Mathematics of the 2023 UPFI Inc. Professorial Chairs and Faculty Grants (OVCAA, UP Diliman) for this project.



\appendix
\section{Preliminaries}
\label{chap:preliminaries}

\subsection{Fundamentals of chemical reaction networks}

A \emph{chemical reaction network} (CRN)
is defined by a triple of nonempty and finite sets $\mathscr{N}=(\mathscr{S}, \mathscr{C}, \mathscr{R})$ with
    \begin{itemize}
        \item[a.] \emph{species set} $\mathscr{S} = \left\{A_1, A_2, \ldots, A_m \right\}$,
        \item[b.] \emph{complex set} $\mathscr{C} = \{C_1, C_2, \ldots, C_n\}$ consisting of non-negative linear combinations of the species, and
        \item[c.] \emph{reaction set} $\mathscr{R} = \{R_1, R_2, \ldots, R_r\} \subset \mathscr{C} \times \mathscr{C}$.
    \end{itemize}
    
We commonly represent a reaction $(y, y')$ as $y \to y'$. Here, $y$ is referred to as \emph{reactant complex} while $y'$ is termed \emph{product complex}. Moreover, \emph{reaction vector} of the reaction is the difference $y' - y$, which is a linear combination, probably nonnegative, of the species.

The linear subspace within the vector space $\mathbb{R}^m$, over $\mathbb{R}$, generated by all the reaction vectors in the CRN is identified as the \emph{stoichiometric subspace} of $\mathscr{N}$, i.e., $S = \mathrm{span}\{y' - y \in \mathbb{R}^m \mid y \rightarrow y' \in \mathscr{R}\}$. The \emph{stoichiometric matrix} of the network is an $m \times r$ matrix, where each column contains the coefficients of the associated species in the corresponding reaction vector linked to the respective reaction.

The \emph{deficiency} of a CRN is given by $\delta = n - \ell - s$ where $n$ is the number of complexes, $\ell$ is the number of connected components and $s$ is the rank of the stoichiometric matrix, which coincides with the dimension of the stoichiometric subspace of the network. Finally, a CRN is \emph{weakly reversible} if each reaction belongs to a directed cycle.

The \emph{linkage classes} are the connected components of a CRN when viewed as an undirected graph. We denote the number of linkage classes by $\ell$.
Furthermore, a linkage class is said to be \emph{strong linkage class} if for each pair $i$ and $j$, there is a directed path from complex $C_i$ to complex $C_j$, and vice versa. We denote the number of strong linkage classes by $s \ell$. A \emph{terminal strong linkage classes} is a maximal strongly connected subnetwork where there are no reactions from a complex in the subgraph to a complex outside the subnetwork. We denote the number of terminal strong linkage classes by $t$.

\subsection{Fundamentals of chemical kinetic systems}

To describe the dynamics of the evolution of the concentrations of species over time, a CRN is endowed with kinetics. Kinetics is defined as follows:

A \emph{kinetics} for a reaction network $\mathscr{N}=(\mathscr{S}, \mathscr{C}, \mathscr{R})$ is an assignment to
each reaction $y \to y' \in \mathscr{R}$ of a continuously differentiable {rate function} $K_{y\to y'}: \mathbb{R}^m_{{\geq} 0} \to \mathbb{R}_{\ge 0}$ such that the following positivity condition holds:
$K_{y\to y'}(c) > 0$ if and only if ${\rm{supp \ }} y \subset {\rm{supp \ }} c$, where ${\rm{supp \ }} y$ refers to the support of the vector $y$, i.e., the set of species with nonzero coefficient in $y$.
The pair $\left(\mathscr{N},K\right)$ is called a \emph{chemical kinetic system}.
In particular, a kinetics for a CRN $(\mathscr{S},\mathscr{C},\mathscr{R})$ is \emph{mass-action} if for each reaction $y\to y'$ (i.e., $[y_1,y_2,\ldots, y_m]^\top \to [y_1',y_2',\ldots, y_m']^\top$),
$$K_{y\to y'}(x)=k_{y\to y'}\prod _{i \in \mathscr{S}} x_i^{y_i}$$
for some $k_{y\to y'}>0$.

The \emph{species formation rate function} (SFRF) of a chemical reaction system $(\mathscr{N},K)$ is given by $f\left( x \right) = \displaystyle \sum\limits_{{y} \to {y'} \in \mathscr{R}} {{K_{{y} \to {y'}}}\left( x \right)\left( {{y'} - {y}} \right)}.$
Note that the SFRF can be written as
$f(x) = NK(x)$ where where $N$ represents the stoichiometric matrix of the network, and $K$ is the vector of rate functions.
The system of \emph{ordinary differential equations} (ODEs) for a chemical kinetic system is described by $\dfrac{{dx}}{{dt}} = f\left( x \right)$, where $x$ denotes a vector that represents the concentrations of species evolving over time.

A \emph{steady state} or \emph{equilibrium} is represented by a vector $c$ of species concentrations satisfying the condition $f(c)=0$. An \emph{equilibrium} is \emph{positive} when each concentration in the vector is greater than zero.
We denote the set of positive steady states of a chemical kinetic system $(\mathscr{N},K)$ by $E:=E_+(\mathscr{N},K)$.
Assuming that $f$ is a differentiable function, an equilibrium $x^*$ is \emph{degenerate} if $\text{Ker}(J_{x^*} (f)) \cap S \neq \{0\}$ where $J_{x^*} (f)$ is the Jacobian of $f$ evaluated at $x^*$. Otherwise, the equilibrium is \emph{non-degenerate}.

The reaction vectors of a CRN are \emph{positively dependent} if for each reaction $C_i \rightarrow C_j$ in the network, there exists a positive number $\alpha_{C_i \rightarrow C_j}$ such that
\begin{equation*}
	\sum_{C_i \rightarrow C_j} \alpha_{C_i \rightarrow C_j} (C_j - C_i) = 0.
\end{equation*}
A CRN with positively dependent reaction vectors is called \emph{positive dependent}.

A CRN \emph{admits multiple (positive) equilibria} or is characterized as \emph{multi-stationary} if positive rate constants exist such that the ODE system has more than one stoichiometrically-compatible equilibria.


\subsection{Decompositions of chemical reaction networks}

A CRN can be decomposed into \emph{subnetworks} \cite{Feinberg1987stability,FeinbergBook2019,HEDC2021}. by partitioning its reaction set into disjoint subsets.
If the rank of the stoichiometric matrix of the whole network (the \emph{stoichiometric matrix} is equal to the sum of the ranks of the stoichiometric matrices of its subnetworks, then the decomposition is \emph{independent}. In this case, the subnetworks are called \emph{independent subnetworks}.
The significance of independent decompositions is apparent through the following result by Martin Feinberg \cite{Feinberg1987stability,FeinbergBook2019}. 

\begin{theorem}
\label{theorem:independent}
    Let $\mathcal{N}$ be a CRN endowed with kinetics $K$ and let $\mathcal{N}$ be decomposed into independent subnetworks $N_1, \ N_2, \ldots,  N_n$ such that the rate functions of the reactions in $\mathcal{N}$ are also the rate functions of the reactions in the smaller independent subnetworks. Then the set of positive steady states of the whole network coincides with the intersection of the sets of positive steady states of the $n$ independent subnetworks, i.e.,
    \begin{align*}
        E = E_1 \cap E_2 \cap \ldots \cap E_n.
    \end{align*}
\end{theorem}

\subsection{Concordance concepts and basic properties}
\label{concordance:concept:properties}

To define the concept, we need to consider the linear map
$L:\mathbb{R}^r \to S$ (with $S$ as the stoichiometric subspace) where $L(\alpha)=\displaystyle \sum_{y \to y' \in \mathscr{R}}\alpha_{y \to y'}(y'-y)$.

\begin{definition}
    Let $\mathscr{R}$ be the reaction set of a reaction network. The reaction network is \emph{concordant} if there do not exist an $\alpha \in \ker{L}$ and a nonzero $\sigma \in S$ having the following properties:
    \begin{enumerate}
        \item For each $y \to y' \in \mathscr{R}$ such that $\alpha_{y\to y'} \ne 0$, ${\rm{supp}}({y})$ contains a species $A$ for which ${\rm{sgn}}{(\sigma_A)}={\rm{sgn}}{(\sigma_{y \to y'})}$ where $\sigma_A$ denotes the term in $\sigma$ involving the species $A$ and $\rm{sgn}$ is the signum function.
        \item For each $y \to y' \in \mathscr{R}$ such that $\alpha_{y\to y'} = 0$, either ${\sigma_A}=0$ for all $A \in {\rm{supp}}({y})$, or else ${\rm{supp}}({y})$ contains species $A$ and $A'$ for which ${\rm{sgn}}{(\sigma_A)}={\rm{sgn}}{(\sigma_{A'})}$, but not zero.
    \end{enumerate}
    A network that is not concordant is called \emph{discordant}.
\end{definition}

Concordance is closely associated with two classes of kinetics on a network: injective and weakly monotonic kinetics, as described in \cite{SHFE2012}.

\begin{definition}
    A kinetic system $(\mathscr{N},K)$ is \emph{injective} if, for each pair of distinct stoichiometrically compatible vectors $x^*,x^{**} \in \mathbb{R}_{\ge 0}^m$ ($m$ here is the number of species), at least one of which is positive,
    \begin{equation*}
	\sum_{y \rightarrow y' \in \mathscr{R}} K_{y \to y' }(x^{**}) (y' - y) \ne \sum_{y \rightarrow y' \in \mathscr{R}} K_{y \to y'}(x^{*}) (y' - y).
\end{equation*}
\end{definition}

An injective kinetic system is necessarily a monostationary system. It cannot admit two distinct stoichiometrically compatible equilibria, at least one of which is positive.

\begin{definition}
    A kinetics $K$ for a reaction network $\mathscr{N}$ is \emph{weakly monotonic}, if for each pair of vectors $x^*,x^{**} \in \mathbb{R}_{\ge 0}^m$, the following implications hold for each $y \to y' \in \mathscr{R}$ such that ${\rm{supp}}({y}) \subset {\rm{supp}}({x^*})$ and ${\rm{supp}}({y}) \subset {\rm{supp}}({x^{**}})$:
    \begin{enumerate}
        \item $K_{y \to y' }(x^{**})>K_{y \to y' }(x^{*})$ $\implies$ there exists a species $A \in {\rm{supp}}({y})$ with $x_A^{**}>x_A^{*}$.
        \item $K_{y \to y' }(x^{**})=K_{y \to y' }(x^{*})$ $\implies$ $x_A^{**}=x_A^{*}$ for all $A \in {\rm{supp}}({y})$ or else there are species $A,A' \in {\rm{supp}}({y})$ with $x_A^{**}>x_A^{*}$ and $x_{A'}^{**}<x_{A'}^{*}$.
    \end{enumerate}
    We say that a system $(\mathscr{N},K)$ is \emph{weakly monotonic} when its kinetics $K$ is weakly monotonic.
\end{definition}

Some examples of weakly monotonic kinetics systems are mass action systems and a class of power law systems where all kinetic orders are nonnegative called non-inhibitory kinetics, denoted by PL-NIK.

\section{Meaning of the species in Wnt signaling networks}
\label{app:var}

Table \ref{tab:species} provides the species in the Wnt signaling networks that we consider.

\begin{table}[ht!]
\centering
\caption{Species and corresponding biomolecules that can occur in the Wnt signaling models considered}
\label{tab:species}
\begin{tabular}{|c|l|}
    \hline
    Species & \multicolumn{1}{c|}{Meaning} \\
    \hline
    $A_1$ & destruction complex (DC) (active form) \\
    \hline
    $A_2$ & DC (inactive form) \\
    \hline
    $A_3$ & active DC residing in the nucleus \\
    \hline
    $A_4$ & $\beta$-catenin \\
    \hline
    $A_5$ & $\beta$-catenin in the nucleus \\
    \hline
    $A_6$ & T-cell factor (TCF) \\
    \hline
    $A_7$ & $\beta$-catenin-TCF complex \\
    \hline
    $A_8$ & $\beta$-catenin bound with DC \\
    \hline
    $A_9$ & $\beta$-catenin bound with DC in the nucleus \\
    \hline
    $A_{10}$ & $\beta$-catenin (for proteasomal degradation) \\
    \hline
    $A_{11}$ & $\beta$-catenin (for proteasomal degradation) in the nucleus \\
    \hline
    $A_{12}$ & dishevelled (inactive form) \\
    \hline
    $A_{13}$ & dishevelled (active form) \\
    \hline
    $A_{14}$ & active dishevelled in the nucleus \\
    \hline
    $A_{15}$ & inactive DC in the nucleus \\
    \hline
    $A_{16}$ & phosphatase \\
    \hline
    $A_{17}$ & phosphatase in the nucleus \\
    \hline
    $A_{18}$ & active DC bound with dishevelled \\
    \hline
    $A_{19}$ & active DC bound with dishevelled in the nucleus \\
    \hline
    $A_{20}$ &active DC bound with phosphatase \\
    \hline
    $A_{21}$ & active DC bound with phosphatase in the nucleus \\
    \hline
    $A_{22}$ & GSK3$\beta$ \\
    \hline
    $A_{23}$ & axin-APC complex \\
    \hline
    $A_{24}$ & APC \\
    \hline
    $A_{25}$ & $\beta$-catenin bound with DC (for proteasomal degradation) \\
    \hline
    $A_{26}$ & axin \\
    \hline
    $A_{27}$ & $\beta$-catenin-axin complex \\
    \hline
    $A_{28}$ & a complex considered as a single species (in \cite{FeinbergBook2019}): \\
    & ($A_{13}+A_{22}+A_{23}=A_{28}$)\\
    \hline
\end{tabular}
\end{table}

\end{document}